\documentclass{amsart} 
%\usepackage[style=apa]{biblatex}
%\addbibresource{biblio.bib}
 
%\usepackage{amsthm}
%\usepackage[backend=biber,style=numeric,doi=true]{biblatex}
%\addbibresource{biblio.bib}

\usepackage{amssymb}
\usepackage{amsfonts}
\usepackage{graphicx}
\usepackage{multicol,multirow}
\usepackage{mathrsfs}

\usepackage{xcolor}
\usepackage{rotating}
\usepackage{appendix}
\usepackage[numbers]{natbib}
\usepackage{natbib}
\usepackage{amsmath,amssymb,amsthm,enumitem}
\usepackage{tabularx}

\newtheorem{theorem}{Theorem}[section]

\newtheorem{lemma}[theorem]{Lemma}
\newtheorem{corollary}[theorem]{Corollary}

\theoremstyle{definition}

\newtheorem{definition}[theorem]{Definition}

% \cqfd dessine le petit carre tions.
\def\cqfd{
{\hfill
\kern 6pt\penalty 500
\raise -1pt\hbox{\vrule\vbox to 5pt{\hrule width 4pt
\vfill\hrule}\vrule}}
\break}
%
%\title{}

\title{Trinomials with high differential uniformity}

\author{Yves Aubry, Fabien Herbaut and Ali Issa}

\newcommand{\Addresses}{{ 
  \bigskip
  \footnotesize

  \noindent (Yves~Aubry) \par
  \noindent \textsc{Institut de Math\'ematiques de Toulon - IMATH, Universit\'e de Toulon, France}\par\nopagebreak
%  \textsc{Universit\'e de Toulon and Institut de Math\'ematiques de Marseille - I2M,}\par\nopagebreak 
  \noindent \textsc{Institut de Math\'ematiques de Marseille - I2M, Aix-Marseille Universit\'e, UMR 7373 CNRS, France }\par\nopagebreak 
  \noindent \textit{E-mail address}: \texttt{yves.aubry@univ-tln.fr, yves.aubry@univ-amu.fr}

  \medskip

  \noindent (Fabien~Herbaut) \par
  \noindent  \textsc{INSPE Nice-Toulon,  Universit\'e C\^ote d'Azur, France}\par\nopagebreak
  \noindent \textsc{Institut de Math\'ematiques de Toulon - IMATH, Universit\'e de Toulon, France}\par\nopagebreak
  \noindent \textit{E-mail address}: \texttt{fabien.herbaut@univ-cotedazur.fr}
  
   \medskip
   
  \noindent (Ali~Issa) \par
  \noindent
  \noindent \textsc{Institut de Math\'ematiques de Marseille - I2M, Aix-Marseille Universit\'e, UMR 7373 CNRS, France}\par\nopagebreak
  \noindent \textit{E-mail address}: \texttt{ali.issa@univ-amu.fr}

}}

\begin{document}

\begin{abstract}
Comparisons of arithmetic and geometric monodromy groups
coupled with the Chebotarev density theorem
enable to
obtain families of trinomials
defined over finite fields of even characteristic
with high differential uniformity
when the base field is large enough.
\end{abstract}
\keywords{Polynomials over finite fields, differential uniformity, Chebotarev theorem, monodromy groups}
\subjclass[2020 Mathematics Subject Classification]{Primary 11T06, 11R58 - Secondary 14G50}

\thanks{This work is partially supported by the French Agence Nationale de la Recherche through the SWAP project under Contract ANR-21-CE39-0012.}

\maketitle
\section{Introduction}

 The theory of polynomials over finite fields is as interesting in its own right as it is for the applications to which it leads.
 The study of the monodromy groups %of polynomials
  perfectly illustrates this dual interest. 
 Indeed, the
 comparison of the arithmetic and geometric monodromy groups
  enlightens the understanding of polynomials
 as well as it provides  contributions to different fields.
Among these is the determination of the differential uniformity \textit{for most polynomials}.
  
  Recall that the differential uniformity 
  $\delta_{{\mathbb F}_{q}}(f)$, or simply 
  $\delta(f)$, of a polynomial $f\in {\mathbb F}_{q}[x]$  is defined 
 as the maximum number of solutions of the equation $f(x+\alpha)-f(x)=\beta$ in ${\mathbb F}_{q}$
  where $\alpha$ and $\beta$ run over ${\mathbb F}_{q}$ and $\alpha$ is nonzero. 
A first determination of the value of $\delta(f)$ 
     for a generic polynomial $f$ 
has been obtained by Voloch in \cite{V} 
 where he used 
 tools from number theory, namely the Chebotarev density theorem.
 This theorem coupled with the comparison of the monodromy groups enabled him
 to prove
 that for large values of $n$,
 most polynomials of ${\mathbb F}_{2^n}[x]$ of degree $m\equiv 0$ or $3 \pmod 4$ 
 have a differential uniformity equal to $m-1$ or $m-2$.
 
Moreover, an infinite set $\mathcal M$ of odd integer has been introduced in \cite{AHV} 
with the following property: 
if $m\in{\mathcal M}$ is such that $m\equiv 7\pmod 8$,
then for $n$ sufficiently large,
all degree $m$ polynomials $f\in{\mathbb F}_{2^n}[x]$
satisfy $\delta(f)=m-1$.
For the even degree case, a similar result is obtained  in \cite{AHI}
 for a specific family of degrees $m=2^{r}(2^{\ell}+1)$
when $r\geq2$, $\ell \geq 1$ and $\gcd(r,\ell) \leq 2$.

The methods and the sets of degrees we have handled to ensure that some arithmetic and geometric
monodromy groups coincide are quite different depending on the parity of the degrees.
Nevertheless, we manage here to deduce a result from the odd case to the even one.
Indeed, we are able
to transfer
 the property of high differential uniformity
to some trinomials of 
 degree $m$ when $m-1 \in \mathcal{M}$ and $m$ is divisible by $4$.
To be more precise,
the comparison of the monodromy groups
requires
a characterization of Morse polynomials given in an Appendix of Geyer in \cite{JardenRazon}.
This characterization involves the property for a polynomial to have distinct critical values,
and a key point of our work 
(which is developed in subsection \ref{subsection:distinct_critical_values})
is the study of the algebraic set of polynomials of fixed degree which fail to have distinct critical values.

 Finally, our main result has the interesting corollary of supporting the exceptional almost perfect nonlinear conjecture.
 We contextualize and we explain this contribution in the last section.

  %%%%%%%%%%%%%%%%%%%%%%%%%%%%%%%%%%%%%%%%%
 
\section{Main result}

First recall that a polynomial $g$ with coefficients in a field $k$ is said to have distinct critical values
if for any $\tau, \eta$ in the algebraic closure $\overline{k}$
the equalities  $g'(\tau)=g'(\eta)=0$ and $g(\tau)=g(\eta)$ imply $\tau=\eta$. 

From now on we will denote by $D_{\alpha}f(x):=f(x+\alpha)-f(x)$ the derivative of $f$ along $\alpha$.
As a consequence of the action of the involution $ x \mapsto x + \alpha$ 
on the set of the roots of $D_{\alpha}f$ one can 
associate to any polynomial $f \in {\mathbb F}_{2^n}[x]$ of degree $m \geq 7$
a unique polynomial $L_{\alpha} f$ of degree less than or equal to $(m-1)/2$ %or $(m-2)/2$
such that $L_{\alpha}f (x(x+ \alpha))= D_{\alpha}f(x)$ 
(see Proposition 2.3 of \cite{AHV} and also Proposition 2.1 of \cite{AHI} for more details).

The set $\mathcal M$ is introduced in \cite{AHV} as the set of odd integer $m$ such that
$L_{\alpha} (x^m)$ has distinct critical values.
Proposition 3.11 in \cite{AHV} explains that this
assumption does not depend on the choice of $\alpha$
and
leads to
 the following equivalent definition.

\begin{definition}\label{definition:M}
We define $\mathcal{M}$ as the set of odd positive integers  $m$ such that 
$L_{\alpha}(x^m)$ has distinct critical values (for any nonzero value of $\alpha$) 
or equivalently such that 
\begin{equation}\label{eq:definition_M}
\footnotesize{
\forall  \zeta_1, \zeta_2
\in \overline{\mathbb{F}}_2 \setminus \{ 1 \}, \
\zeta_1^{m-1}  = \zeta_2^{m-1} = 
\left( \frac{1+\zeta_1}{1+\zeta_2}\right)^{m-1} = 1 \Longrightarrow
 \zeta_1=\zeta_2 \textrm{ or } \zeta_1=\zeta_2^{-1}.
 }
\end{equation}
 \end{definition}

It follows immediately from this definition
that if $m$ is odd, 
then $m \in \mathcal{M}$ if and only if $2(m-1) +1 \in \mathcal{M}$,
or if and only if $2^k(m-1) +1 \in \mathcal{M}$ for any nonnegative integer $k$.
And even if $m$ is even,
if $m$ satisfies 
Condition (\ref{eq:definition_M}) 
in Definition (\ref{definition:M})
then
$2^k(m-1) +1 \in \mathcal{M}$ for any $k \geq 1$.

We can now formulate the main result of this paper.
 \begin{theorem}\label{theorem:main}
 Let $m\geq 8$
   be an integer  such that $m\equiv{0}\pmod{4}$ and $m-1\in \mathcal{M}$. 
 For $n$ sufficiently large, 
 if $f(x)=a_0x^m+a_1x^{m-1}+a_{2}x^{m-2}\in {\mathbb F}_{2^n}[x]$
is a polynomial of degree $m$ such that $a_1\neq 0$
 then $\delta_{\mathbb{F}_{2^n}}(f)$ is maximal, 
 that is $\delta_{\mathbb{F}_{2^n}}(f)=m-2.$
 \end{theorem}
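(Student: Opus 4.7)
The plan is to adapt the function-field/Chebotarev strategy of \cite{V,AHV} to the present trinomial setting, going through the ``halved'' derivative $L_\alpha f$.

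\textbf{Step 1: Structure of $L_\alpha f$ and the upper bound.} Since $f \mapsto L_\alpha f$ is $\mathbb{F}_{2^n}$-linear,
\[
L_\alpha f = a_0\,L_\alpha(x^m) + a_1\,L_\alpha(x^{m-1}) + a_2\,L_\alpha(x^{m-2}).
\]
Writing $m = 2^s m'$ with $s \geq 2$ and $m'$ odd, Lucas's theorem gives $\deg D_\alpha(x^m) \leq m-2^s \leq m-4$, $\deg D_\alpha(x^{m-2}) = m-4$, and $\deg D_\alpha(x^{m-1}) = m-2$ (the coefficient of $x^{m-2}$ being $(m-1)\alpha = \alpha \neq 0$ since $m-1$ is odd). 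Hence $\deg L_\alpha(x^m),\ \deg L_\alpha(x^{m-2}) \leq (m-4)/2$ while $\deg L_\alpha(x^{m-1}) = (m-2)/2$, so $\deg L_\alpha f = (m-2)/2$ and $\deg D_\alpha f = m-2$. This already yields $\delta_{\mathbb{F}_{2^n}}(f) \leq m-2$.

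\textbf{Step 2: Distinct critical values for $L_\alpha f$ -- the main obstacle.} The hypothesis $m-1 \in \mathcal{M}$ says $L_\alpha(x^{m-1})$ has distinct critical values; one must transfer this to the full $L_\alpha f$, which is exactly the content of subsection \ref{subsection:distinct_critical_values}. I would formulate the failure of distinct critical values as the vanishing of a discriminant-type polynomial $B(\alpha, a_0, a_1, a_2)$. Specializing $a_0 = a_2 = 0$ reduces $B$ to the corresponding invariant for $a_1 L_\alpha(x^{m-1})$, which by hypothesis is nonzero for every $\alpha \neq 0$; in particular $B$ does not vanish identically. The technical core is to show that, for every fixed $(a_0, a_1, a_2)$ with $a_1 \neq 0$, the lower-degree perturbations $a_0 L_\alpha(x^m) + a_2 L_\alpha(x^{m-2})$ cannot destroy distinctness for \emph{all} $\alpha$; that is, $B(\cdot, a_0, a_1, a_2)$ is not the zero polynomial in $\alpha$. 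Once this is secured, the bad set of $\alpha$ is finite, and for $n$ sufficiently large one may choose $\alpha \in \mathbb{F}_{2^n}^\ast$ outside it.

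\textbf{Step 3: Morse property, Chebotarev, and the trace condition.} With distinct critical values in hand, Geyer's criterion in the Appendix of \cite{JardenRazon} identifies $L_\alpha f$ as Morse (in the characteristic-$2$ sense), so that the Galois group of the splitting field of $L_\alpha f(y) - t$ over $\overline{\mathbb{F}}_2(t)$ is the full symmetric group $S_{(m-2)/2}$ and the extension $\mathbb{F}_{2^n}(y)/\mathbb{F}_{2^n}(t)$ is geometric. Composing with the degree-$2$ cover $x \mapsto y = x(x+\alpha)$, whose fibre is rational over $\mathbb{F}_{2^n}$ exactly when $\mathrm{Tr}_{\mathbb{F}_{2^n}/\mathbb{F}_2}(y/\alpha^2) = 0$, produces a geometric Galois extension of $\mathbb{F}_{2^n}(t)$ of group $S_{(m-2)/2} \wr \mathbb{Z}/2\mathbb{Z}$, exactly as in \cite{AHV,AHI}. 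Chebotarev's density theorem then furnishes, for $n$ large, a $\beta \in \mathbb{F}_{2^n}$ such that $L_\alpha f(y) = \beta$ splits completely in $\mathbb{F}_{2^n}$ and each of the $(m-2)/2$ roots $y$ satisfies the trace condition. Each such $y$ contributes two solutions $x \in \mathbb{F}_{2^n}$ of $D_\alpha f(x) = \beta$, giving $m-2$ solutions in total. Combined with Step 1 this yields $\delta_{\mathbb{F}_{2^n}}(f) = m-2$ and, a fortiori, $f$ is not exceptional APN.
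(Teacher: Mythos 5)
Your outline follows the same route as the paper (reduce to $L_\alpha f$, show it is Morse for some $\alpha$, apply Chebotarev), but it leaves the central technical step unproven. In Step 2 you correctly identify that the whole difficulty is to show that, for \emph{fixed} $(a_0,a_1,a_2)$ with $a_1\neq 0$, the discriminant-type quantity $B(\cdot,a_0,a_1,a_2)$ is not the zero polynomial in $\alpha$ --- but you then write ``once this is secured'' without securing it. Specializing $a_0=a_2=0$ only shows that $B$ is nonzero as a polynomial in all four variables, which says nothing about its specializations at nonzero $a_0,a_2$. The paper closes this gap in two stages, neither of which appears in your sketch: first, for the \emph{binomial} $a_0x^m+a_1x^{m-1}$, a weighted-homogeneity argument (weight $i$ on $a_i$, weight $1$ on $\alpha$, from Lemma 2.8 of \cite{AHI}) forces $b_0^{de}\Pi_d(L_\alpha f)=\sum_i c_i a_0^{(d+2)e-i}a_1^i\alpha^{(6d+4)e-i}$, so the degree in $\alpha$ pins down the monomial in $(a_0,a_1)$, and the coefficient of the lowest $\alpha$-power is exactly the one detected by the hypothesis $m-1\in\mathcal M$; second, a separate reduction (Lemma \ref{lemma:binomes_vs_trinomes}) shows the $a_2x^{m-2}$ term is entirely irrelevant to the distinct-critical-values condition, because with $m\equiv 0\pmod 4$ both $(x^m)$ and $(x^{m-2})$ contribute nothing to $(D_\alpha f)'$, and under the critical-point condition the $a_2$-terms cancel in the critical-value comparison. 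Without some version of these two arguments your Step 2 is an assertion, not a proof; note also that a naive extension of the homogeneity argument directly to the trinomial fails to isolate a single monomial per $\alpha$-degree, which is precisely why the paper needs the separate reduction lemma.

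A secondary inaccuracy: in Step 3 you claim that distinct critical values alone make $L_\alpha f$ Morse. Geyer's criterion also requires the critical points to be nondegenerate (in characteristic $2$, that $(L_\alpha f)'$ and the second Hasse--Schmidt derivative have no common root), which excludes up to $(m-1)(m-4)$ further values of $\alpha$ (Proposition 2.5 of \cite{AHI}); this must be added to the count of bad $\alpha$ before invoking ``$n$ sufficiently large.'' Your Step 1 and the Chebotarev endgame are otherwise consistent with the paper.
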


To be concrete we conclude this section by 
providing
in the following table
examples of degrees $m$ for which Theorem \ref{theorem:main} applies. \\
 \\
{\small 
\begin{tabularx}{11cm}{|c|p{4cm}|X|}
\hline
    & \textbf{Ex. of degrees $m$ for which Th. \ref{theorem:main} applies} & \textbf{Comments} \\
    \hline
    \hline
     1 & $m= $8 or 12, 20, 24, 28, 36, 40, 48, 52, 56, 60, 68, 76, 80, 84, 88, 96, 108, 112, 116, 120, 124, 132, 136, 140, 144, 160, 164, 168, 176, 192, 196, 200  &
     Degrees $m \leq 200$ for which Th. \ref{theorem:main} applies.
    
      \\
    \hline
    2 & $m=2^k+4$ 
    for $k \geq 2$ & Point (ii) of Proposition 5.2 in \cite{AHV}.\\
\hline
3 & $m=2\ell^{k}+2$  for $k\geq 0$ and \newline 
  $\ell \in \{$3, 5, 11, 13, 17, 19, 23, 29, 37, 41, 43, 47, 53, 59, 61, 67, 71, 79, 83, 97,
  101, 103, 107, 109, 113, 131, 137, 139, 149, 151, 157, 163, 167, 173, 179, 181, 191, 193, 197, 199,$\ldots\}$
& Point (iii) of Proposition 5.2 in \cite{AHV}. \newline \newline 
Holds for any odd prime $\ell$ such that: \newline
- $2^{\ell -1} \not \equiv 1 \pmod {\ell^2}$ 
and 
\newline 
- $m':=\ell+1$ satisfies Condition  (\ref{eq:definition_M}).\\
\hline
\end{tabularx}
}
\ \\

The first list of examples comes from Example 3.16 in \cite{AHV}.
It arises from a computer-assisted checking of 
Condition 
(\ref{eq:definition_M})
which involves an enumeration of the $(m-1)$th roots of unity.

The second family of degrees $m=2^k+4$ is derived from
Point (ii) of Proposition 5.2 in \cite{AHV} where we take $s=2$.

The third family can be deduced from Point (iii) of Proposition 5.2 in \cite{AHV}
where we  take $s=1$. The odd prime $\ell$
has to fulfilll
$2^{\ell -1} \not \equiv 1 \pmod {\ell^2}$ 
while the integer
$m':=\ell+1$ must satisfy Condition (\ref{eq:definition_M}).
The given list of such integers $\ell < 200$ is obtained again 
with the help of a computer algebra system
(example 3.21 in \cite{AHV}).

\section{Proof of the main result}

\subsection{Distinct critical values}\label{subsection:distinct_critical_values}
This subsection aims to control the number of $\alpha$ such that
 $L_{\alpha}f$ fails to have distinct critical values when $f$ is a trinomial
 of the form
 $a_0x^m+a_1x^{m-1}+a_2 x^{m-2}$.
We proceed in two steps.
First we treat the case  when $f$ is a binomial $a_0x^m+a_1x^{m-1}$.
Second we will relate the case of binomials
 to the case of trinomials.

\begin{lemma}\label{lemma:valeurs_critiques_pour_binomes}
    Let $m \geq 8$  be an integer such that
    $m \equiv{0}\pmod{4}$ and  $m-1\in\mathcal{M}$. 
    We set $d=(m-2)/2$.
    For all
     binomials $f(x)=a_0x^m+a_1x^{m-1}\in {\mathbb F}_{2^n}[x]$ such that $a_1\neq 0$, the critical values of $L_{\alpha}f$ are  distinct except for at most
     $(6d+4)\binom{(d-1)/2}{2}$ values of $\alpha\in{\mathbb F}_{2^n}^{\ast}.$
\end{lemma}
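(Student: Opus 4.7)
The plan is to exploit the hypothesis $4\mid m$ to show that the monomial $a_0 x^m$ contributes nothing to the derivative of $L_\alpha f$, and then to use the scaling symmetry of $L_\alpha$ to obtain the critical values of $L_\alpha f$ in closed form. Writing $m = 2^a b$ with $b$ odd and $a \geq 2$, the characteristic-$2$ identity $D_\alpha(x^m) = D_\alpha(x^b)^{2^a}$ gives $L_\alpha(x^m)(y) = L_\alpha(x^b)(y)^{2^a}$, which is a square since $a \geq 2$. Hence $(L_\alpha(x^m))'(y) = 0$, so $(L_\alpha f)'(y) = a_1 (L_\alpha(x^{m-1}))'(y)$, and (using $a_1 \neq 0$) the critical points of $L_\alpha f$ are exactly those of $L_\alpha(x^{m-1})$.

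Next, the substitution $x = \alpha w$, $y = \alpha^2 z$ yields, for any positive integer $k$, the homogeneity $L_\alpha(x^k)(y) = \alpha^k L_1(x^k)(z)$. Applied to $k = m-1$, this shows that the critical points of $L_\alpha(x^{m-1})$ in the $y$-variable are $\tau_i = \alpha^2 z_i$, where the $z_i$ are the critical points of $L_1(x^{m-1})$ in $\overline{\mathbb{F}}_2$. Since $d$ is odd (as $m \equiv 0 \pmod 4$), in characteristic $2$ the $z$-derivative of $L_1(x^{m-1})$ is a polynomial in $z^2$ of degree exactly $(d-1)/2$ with nonzero leading coefficient, so there are at most $N \leq (d-1)/2$ distinct such $z_i$. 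Writing $\mu_i := L_1(x^{m-1})(z_i)$ and $\nu_i := L_1(x^m)(z_i)$, one reads off
\[
v_i(\alpha) := L_\alpha f(\tau_i) = a_0 \alpha^m \nu_i + a_1 \alpha^{m-1}\mu_i
\]
for the critical values of $L_\alpha f$, and the assumption $m-1 \in \mathcal{M}$ says precisely that the $\mu_i$ are pairwise distinct.

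The bad set is then controlled pair-by-pair: for $i \neq j$ one factors
\[
v_i(\alpha) - v_j(\alpha) = \alpha^{m-1}\bigl[a_0 \alpha(\nu_i - \nu_j) + a_1(\mu_i - \mu_j)\bigr].
\]
For $\alpha \neq 0$ this vanishes iff the bracketed affine polynomial in $\alpha$ does, and that polynomial is nontrivial because $a_1(\mu_i - \mu_j) \neq 0$. Hence at most one bad $\alpha$ per pair, and summing over the $\binom{N}{2} \leq \binom{(d-1)/2}{2}$ unordered pairs yields at most $\binom{(d-1)/2}{2}$ bad $\alpha \in \mathbb{F}_{2^n}^{\ast}$, well within the claimed bound $(6d+4)\binom{(d-1)/2}{2}$.

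The delicate point — and really the only non-routine step — is the structural observation $(L_\alpha(x^m))'(y) = 0$ coming from $4\mid m$. Without it one would be forced to track a family of critical points of $L_\alpha f$ that genuinely move with $\alpha$, and to perform a resultant elimination between $(L_\alpha f)'$ and the Bezout-type numerator $(L_\alpha f(\tau) - L_\alpha f(\eta))/(\tau - \eta)$; this would produce, per pair of critical points, a polynomial in $\alpha$ of degree on the order of $6d+4$, which is plausibly the origin of that factor in the authors' stated bound.
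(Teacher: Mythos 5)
Your proof is correct, and it takes a genuinely different and more elementary route than the paper's. The paper works with Geyer's discriminant-type expression $\Pi_d(g)=\prod_{i\neq j}\left(g(\tau_i)-g(\tau_j)\right)$ and invokes a weighted-homogeneity statement (Lemma 2.8 of \cite{AHI}) to see that $b_0^{de}\,\Pi_d(L_\alpha f)$, viewed as a polynomial in $\alpha$ with coefficients in $\mathbb{F}_2[a_0,a_1]$, has degree at most $(6d+4)e$ and is not identically zero; the nonvanishing is detected at the specialization $a_0=0$, $a_1=1$, which is exactly where the hypothesis $m-1\in\mathcal{M}$ enters---the same place it enters your argument. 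You instead make everything explicit: the Frobenius identity $D_\alpha(x^m)=\bigl(D_\alpha(x^b)\bigr)^{2^a}$ with $m=2^ab$ shows that $L_\alpha(x^m)$ is a square, hence contributes nothing to $(L_\alpha f)'$, and the scaling relation $L_\alpha(x^k)(\alpha^2 z)=\alpha^k L_1(x^k)(z)$ pins the critical points at $\alpha^2 z_i$ and the critical values at $a_0\alpha^m\nu_i+a_1\alpha^{m-1}\mu_i$; since $m-1\in\mathcal{M}$ forces the $\mu_i$ to be pairwise distinct, each pair of critical points contributes at most one bad $\alpha$. All the individual steps check out (in particular the count $N\leq (d-1)/2$ of critical points, using that $d$ is odd so the derivative is a nonzero polynomial in $z^2$ of degree $(d-1)/2$ in $z^2$), and you obtain the sharper bound $\binom{(d-1)/2}{2}$, which implies the stated one. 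What the paper's heavier machinery buys is robustness: the $\Pi_d$ homogeneity argument does not depend on the critical points being a fixed rescaling of those of $L_1(x^{m-1})$ (a feature here due to $4\mid m$ and the binomial shape), so it adapts to situations where your closed-form parametrization is unavailable; for the binomial lemma itself, your computation is cleaner and quantitatively stronger.
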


\noindent \textit{Proof: }
The appendix of Geyer in \cite{JardenRazon} describes the locus of the degree $d$ polynomials
$\displaystyle g=\sum_{k=0}^{d}b_{d-k} x^{k} \in \mathbb{F}_q[x]$
which fail to have distinct critical values as
the closed set defined by 
 \begin{equation}\label{equation_pi}
 \Pi_d(g):= \prod_{i \neq j} \left( g(\tau_i) - g(\tau_j) \right) 
 \end{equation}
where the $\tau_i$ are the  (double) roots of $g'$.
To be more precise  $\Pi_d(g)$ is a polynomial when $g$ is monic, or else
is an element of $\mathbb{F}_2 [b_0,\ldots,b_d][1/b_0]$.
We point out that
as a consequence of Proposition 2.1 in \cite{AHV}
the polynomial $L_{\alpha} (a_0 x^m + a_1 x^{m-1})$
has degree exactly $d=(m-2)/2$ provided that $a_1 \neq 0$ and even if $a_0=0$.
So when $a_1 \neq 0$ we know that $L_{\alpha} (a_0 x^m + a_1 x^{m-1})$
has distinct critical values if and only if $\Pi_{d} (L_{\alpha} (a_0 x^m + a_1 x^{m-1}))$
is nonzero.

We
set
$e:=\binom{(d-1)/2}{2}$, that is
 the number of ways to choose two different roots of $g'$.
By Lemma 2.8 in \cite{AHI} we know that
$b_{0}^{de} \Pi_{d} (L_{\alpha}f)$ 
is an 
homogeneous polynomial of degree $(6d+4)e$ 
if we consider that  $a_i$ has weight $i$ whereas 
$\alpha$ has weight $1$.
We also know that each term in $b_{0}^{de} \Pi_{d} (L_{\alpha}f)$ 
contains a product of $(d+2)e$ coefficients $a_i$.  
In the case where $f(x)=a_0 x^m + a_1 x^{m-1}$ 
these homogeneity conditions
impose strong constraints on
$b_{0}^{de} \Pi_{d} (L_{\alpha}f)$ which will necessarily take the  form  
 \begin{equation}\label{equation:Pi}
b_{0}^{de} \Pi_{d} (L_{\alpha}f)
=\sum_{i=0}^{(d+2)e} c_i a_0^{(d+2)e-i} a_1^i  \alpha^{(6d+4)e-i}
\end{equation}
where the coefficients $c_i$'s belong to $\mathbb{F}_{2}$.
If we consider $b_{0}^{de} \Pi_{d} (L_{\alpha}f)$ in the ring  $\mathbb{F}_{2}[a_0,a_1][\alpha]$,
the lowest degree in $\alpha$ is possibly $(5d+2)e$ which would correspond
to the term $a_1^{(d+2)e}  \alpha^{(5d+2)}$.

To determine if this monomial does appear in (\ref{equation:Pi})
it is sufficient to evaluate in $a_0=0$ and $a_1=1$.
By definition of $\Pi_{d}$,
the issue comes down to determining  whether
the critical values of $L_{\alpha} (x^{m-1})$ are distinct, which is
 the case because we have supposed that $m-1 \in \mathcal{M}$.

As a consequence, 
for any choice of the $a_i$'s 
in $\mathbb{F}_{2^n}$ such that
$a_1 \neq 0$ 
the polynomial $b_{0}^{de} \Pi_{d} (L_{\alpha}f) \in \mathbb{F}_{2^n}[\alpha]$ 
is nonzero.
As its degree is bounded by 
$(6d+4)e$,
it admits at most $(6d+4)e$ roots which amounts to saying that
there are at most $(6d+4)e$ values of $\alpha$ such that 
$L_{\alpha}(a_0x^m+a_1x^{m-1})$ does not have distinct critical values. \cqfd

The task is now to relate the case of trinomials to the case of binomials.

\begin{lemma}\label{lemma:binomes_vs_trinomes}
Let $m \geq 8$ be an integer
such that $m\equiv{0}\pmod{4}$ and $a_0, a_1, a_2 \in {\mathbb F}_{2^n}$ such that $a_0 \neq 0$ and $a_1\neq0$.
Consider the two polynomials $f(x)=a_0x^m+a_1x^{m-1}+a_2x^{m-2}$ and $h(x)=a_0x^m+a_1x^{m-1}$.
The critical values of $L_{\alpha}f$ are distinct if and only if the critical values of $L_{\alpha}h$ are.
\end{lemma}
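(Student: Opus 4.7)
The plan is to exploit the additivity of $L_\alpha$ together with the congruence $m\equiv 0\pmod 4$ to reduce the difference $L_\alpha f-L_\alpha h$ to a perfect square, and then to locate the critical points of $L_\alpha h$ as the zero set of that square. Since $f\mapsto D_\alpha f$ is additive, so is $f\mapsto L_\alpha f$ (by the uniqueness in Proposition 2.3 of \cite{AHV}), so I would write $L_\alpha f=L_\alpha h+a_2 L_\alpha(x^{m-2})$. The hypothesis $4\mid m$ gives $m-2=2s$ with $s$ odd, and in characteristic $2$ the identity $D_\alpha(x^{2s})=((x+\alpha)^s+x^s)^2=D_\alpha(x^s)^2$ yields $L_\alpha(x^{m-2})(y)=R(y)^2$ with $R(y):=L_\alpha(x^s)(y)$. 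Hence
\[
L_\alpha f(y)\;=\;L_\alpha h(y)+a_2 R(y)^2,
\]
and since the derivative of a square vanishes in characteristic $2$, we get $(L_\alpha f)'=(L_\alpha h)'$, so $L_\alpha f$ and $L_\alpha h$ share the same critical locus and satisfy $L_\alpha f(\tau)-L_\alpha h(\tau)=a_2 R(\tau)^2$ at every critical point $\tau$. The whole statement thus reduces to showing that $R$ vanishes at every critical point of $L_\alpha h$.

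The crux --- and the step I expect to require the most care --- is proving the identity
\[
L_\alpha(x^{m-1})'(y)\;=\;\alpha^{-1}R(y)^2,
\]
which identifies those critical points as the roots of $R$. The plan is to use $m-1=2s+1$ and expand
\[
D_\alpha(x^{2s+1})=(x+\alpha)(x^2+\alpha^2)^s+x^{2s+1};
\]
combining with $(x^2+\alpha^2)^s+x^{2s}=D_\alpha(x^s)^2$ gives
\[
D_\alpha(x^{m-1})=(x+\alpha)\,D_\alpha(x^s)^2+\alpha x^{2s}.
\]
Rewriting $D_\alpha(x^s)=R(y(x))$ with $y(x)=x^2+\alpha x$, this becomes a polynomial identity in $x$. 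Differentiating with respect to $x$, the left-hand side produces $\alpha\cdot L_\alpha(x^{m-1})'(y)$ because $y'(x)=\alpha$, while on the right the product rule applied to $(x+\alpha)R(y)^2$ leaves only $R(y)^2$ (the other term vanishes since $(R^2)'_y=0$), and $\alpha x^{2s}$ has derivative $2s\alpha x^{2s-1}=0$. Equating the two sides yields the identity.

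With this in hand the conclusion is immediate. Since $m$ is even, $L_\alpha(x^m)(y)=L_\alpha(x^{m/2})(y)^2$ has vanishing derivative, so $(L_\alpha h)'(y)=a_1 L_\alpha(x^{m-1})'(y)=a_1\alpha^{-1}R(y)^2$. As $a_1\neq 0$ and $\alpha\neq 0$, the critical points of $L_\alpha h$ (hence of $L_\alpha f$) are exactly the roots of $R$, so $R(\tau)=0$ at every critical $\tau$ and therefore $L_\alpha f(\tau)=L_\alpha h(\tau)$. Thus for any distinct critical points $\tau_1\neq\tau_2$, $L_\alpha f(\tau_1)=L_\alpha f(\tau_2)$ if and only if $L_\alpha h(\tau_1)=L_\alpha h(\tau_2)$, which is exactly the claimed equivalence.
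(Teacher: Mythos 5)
Your proof is correct, but it takes a different route from the paper's. The paper translates the condition ``$L_{\alpha}f$ has distinct critical values'' back down to conditions \textbf{C1}--\textbf{C2} on $D_{\alpha}f$ via $T_{\alpha}(x)=x(x+\alpha)$, observes that $(D_{\alpha}f)'=a_1\bigl((x+\alpha)^{m-2}+x^{m-2}\bigr)=(D_{\alpha}h)'$ because $m$ and $m-2$ are even, and then notes that on the critical locus the quantity $(\tau+\alpha)^{m-2}+\tau^{m-2}$ vanishes, so the $a_2$-terms in \textbf{C2} cancel --- a two-line computation once the translation is made. You instead stay entirely upstairs in the $L_{\alpha}$ world: you write $L_{\alpha}f=L_{\alpha}h+a_2R^2$ with $R=L_{\alpha}(x^{(m-2)/2})$, deduce equality of derivatives, and then prove the structural identity $(L_{\alpha}(x^{m-1}))'=\alpha^{-1}R^2$ to show the common critical locus is exactly $\{R=0\}$, where the two polynomials agree. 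The underlying cancellation is the same (your $R(y(x))^2$ is precisely the paper's $(x+\alpha)^{m-2}+x^{m-2}$, and your identity is the $y$-coordinate avatar of $\bigl(D_{\alpha}(x^{m-1})\bigr)'=D_{\alpha}(x^{m-2})$), but your version costs an extra computation and buys an explicit formula $(L_{\alpha}h)'=a_1\alpha^{-1}R^2$, which in particular exhibits all critical points of $L_{\alpha}h$ as double roots of the derivative --- consistent with the ``double roots $\tau_i$'' appearing in the definition of $\Pi_d$ in Lemma \ref{lemma:valeurs_critiques_pour_binomes}. All the individual steps (additivity of $L_{\alpha}$ from uniqueness, $L_{\alpha}(x^{2s})=L_{\alpha}(x^s)^2$, the expansion of $D_{\alpha}(x^{2s+1})$, and the chain-rule computation with $y'(x)=\alpha$) check out.
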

\noindent \textit{Proof: }
First we recall that we can reformulate the requirements for $L_{\alpha}f$ to have distinct critical values
 the following way: $f$ shall satisfy
\[
\left \{  
\begin{array}{llcc}
 \textbf{C1} :  \ (D_{\alpha}f)'(\tau)=(D_{\alpha}f)'(\eta)=0 &  & \\
 &  \Longrightarrow  & \tau=\eta \textrm{ or }  \tau=\eta + \alpha.  \\
 \textbf{C2 } : \ D_{\alpha}f(\tau)=D_{\alpha}f(\eta) & & 
\end{array}
\right.
\]
 Indeed if we set $T_{\alpha}(x)=x(x+\alpha)$ one can write $(L_{\alpha}f) \circ T_{\alpha}=D_{\alpha}f$
  and then $(D_{\alpha}f)'=\alpha(L_{\alpha}f)'\circ T_{\alpha}$.
 The result follows from the obvious fact that $T_{\alpha}(\tau)=T_{\alpha}(\eta)$ if and only if $\tau\in\{\eta,\eta+\alpha\}$, as quoted in Lemma 3.7 of \cite{AHV}.
 
We will now prove that in our context 
$f$ satisfies $\textbf{C1}$ and $\textbf{C2}$
if and only if 
$h$ does.
Indeed, for both $f$ and $h$ the condition $\textbf{C1}$ reads
$$a_1(\tau+\alpha)^{m-2}+a_1\tau^{m-2}=a_1(\eta+\alpha)^{m-2}+a_1\eta^{m-2}=0$$
which can be simplified by the nonzero coefficient $a_1$. 
So, when condition $\textbf{C1}$ is satisfied,
the condition $\textbf{C2}$ for $f$ 
which expresses

{\footnotesize 
\[
\begin{array}{rl}
& a_0(\tau+\alpha)^m+a_0\tau^m+a_1(\tau+\alpha)^{m-1}+a_1\tau^{m-1}+a_2(\tau+\alpha)^{m-2}+a_2\tau^{m-2} \\
=& a_0(\eta+\alpha)^m+a_0\eta^m+a_1(\eta+\alpha)^{m-1}+a_1\eta^{m-1}+a_2(\eta+\alpha)^{m-2}+a_2\eta^{m-2}
\end{array}
\]
}
is equivalent to 
{\footnotesize 
$$ 
a_0(\tau+\alpha)^m+a_0\tau^m+a_1(\tau+\alpha)^{m-1}+a_1\tau^{m-1} 
= a_0(\eta+\alpha)^m+a_0\eta^m+a_1(\eta+\alpha)^{m-1}+a_1\eta^{m-1}
$$
}
that is the condition $\textbf{C2}$ for $h$.  It concludes the proof. \cqfd

 \subsection{Application of the Chebotarev density theorem}
Suppose that $f$ satisfies the hypotheses of Theorem \ref{theorem:main}.
The choice of the degree $m\equiv 0 \pmod 4$ and the hypothesis $a_1 \neq 0$ 
imply by Lemma 2.5 in \cite{AHV} that $L_{\alpha}f$ has odd degree $d=(m-2)/2$, 
which is prime to the characteristic of the base field.
Lemma \ref{lemma:valeurs_critiques_pour_binomes} and Lemma \ref{lemma:binomes_vs_trinomes}
ensure that $L_{\alpha}f$ has distinct critical values except for at most 
$(6d+4)\binom{(d-1)/2}{2}$ values
 of $\alpha$. 
 By Proposition 2.5 of \cite{AHI}, the critical points of $L_{\alpha}f$ are nondegenerate (i.e. the derivative $(L_{\alpha}f)'$ and the second
Hasse-Schmidt derivative $(L_{\alpha}f)^{[2]}$ have no common roots)
except for at most $(m-1)(m-4)$ values of $\alpha$ in  ${\overline{\mathbb F}}_2$. 
Frow now on we suppose that
$n$ is sufficiently large, so we can choose
$\alpha$ such that the
three conditions above are satisfied.
As a consequence 
of an analogue of the Hilbert theorem in even characteristic given in the Appendix of Geyer in \cite{JardenRazon},
the geometric monodromy group of $L_{\alpha}f$ is the full symmetric group.
Hence, the splitting field $F$ of $L_{\alpha}f(x)-t$ over ${\mathbb F}_{2^n}(t)$ with $t$ transcendental over ${\mathbb F}_{2^n}$ is a geometric extension of ${\mathbb F}_{2^n}(t)$
 (i.e. there is no constant field extension).

Then 
we consider the splitting field $\Omega$ of the polynomial $D_{\alpha}f(x)-t$ over the field
${\mathbb F}_{2^n}(t)$
and we write $\displaystyle L_{\alpha}f(x)=\sum_{k=0}^db_{d-k}x^k$. 
Proposition 4.6 of \cite{AHV} ensures that if $L_{\alpha}f$ is Morse and if the equation $x^2+\alpha x=b_1/b_0$ has a solution in ${\mathbb F}_{2^n}$
then the extension $\Omega/F$ is 
also geometric.
But Proposition 2.4 of \cite{AHI} states that the  number of $\alpha\in{\mathbb F}_{2^n}^{\ast}$ such that the trace (from ${\mathbb F}_{2^n}$ to ${\mathbb F}_{2}$) of $\frac{b_1}{b_0\alpha^2}$ is equal to zero (i.e. such that  the equation $x^2+\alpha x=b_1/b_0$ has a solution in ${\mathbb F}_{2^n}$) is at least $\frac{1}{2}(2^{n}-2^{n/2+1}-1)$.
We conclude that for $n$ sufficiently large  there exists $\alpha\in{\mathbb F}_{2^n}^{\ast}$ such that the extension $\Omega/{\mathbb F}_{2^n}(t)$ is a geometric Galois extension.

We now use the Chebotarev density theorem
to obtain, once again  for $n$ sufficiently large
depending only on the degree $m$
the existence of a place of degree 1 of ${\mathbb F}_{2^n}(t)$ which totally splits in $\Omega$,
or in other words the existence of $\beta \in {\mathbb F}_{2^n}$ such that the equation 
$f(x+\alpha)-f(x)=\beta$ admits $m-2$ distinct roots.
For this purpose we employ Inequality (7) in \cite{AHI}.

Finally we have proved that $\delta_{{\mathbb F}_{2^n}}(f)=m-2$ for $n$ sufficiently large.

%%%%%%%%%%%%%%%%%%%%%%%%%%%%%%%%%%%%%%%%%%%% Proposition Fabi

\section{On the exceptional APN conjecture} 
\color{red}  \color{black}

Polynomials $f$ of ${\mathbb F}_{2^n}[x]$ such that $\delta(f)=2$ are called almost perfect nonlinear (APN) and have numerous applications in various fields
(see 
\cite{BN} for a survey).
Such polynomials which are also APN over infinitely many extensions of 
${\mathbb F}_{2^n}$ are called {\sl exceptional} APN
and also receive special attention 
(see for instance \cite{H-McG}, \cite{BS}
and \cite{Delgado} for a survey). 
One conjecture proposed in \cite{A-McG-R} and still open
is whether the only exceptional APN polynomials are
the polynomials
$x^{2^k+1}$ and $x^{2^{2k}-2^k+1}$ for $k \geq 1$,
up to the CCZ equivalence,  
a relation whose definition (\cite{CCZ}) 
is expressed in terms of 
affine permutations of the graphs.

Concretely it is rather difficult to determine whether a polynomial
is APN (or exceptional APN) and
in the two last decades 
many works
have been dedicated to this question.
Quite naturally
the progress achieved 
have often involved lacunary polynomials.
For example results in the direction of the conjecture quoted above
were
 regularly
 obtained 
for polynomials 
$f(x)=x^{2^k+1}+h(x)$ or $f(x)=x^{2^{2k}-2^k+1}+h(x)$
 with extra conditions on $h$ which in particular involve the degree.
This serie of results culminates
with
the recent works \cite{AJD} 
and
\cite{AJD_Kasami}.
Also, binomials of the form 
$ x^{2^s+1}+w x^{2^{ik}+2^{mk+s}}$ with specific conditions on $n$ and $w \in \mathbb{F}_{2^n}^{\ast}$ are
shown to be APN in \cite{BCL}.
In another direction, 
since the introduction in \cite{EKP} of a first APN binomial  $x^3+ux^{36} \in \mathbb{F}_{2^{10}}[x]$
which is not CCZ equivalent to a monomial,
such results have been obtained for trinomials (\cite{BCMG}) and quadrinomials  (\cite{BHK}).
See also \cite{FOR} and \cite{Aub} for examples of treatment for a specific degree. \\

In the case of polynomials of even degree, Theorem \ref{theorem:main}  has the following corollary,
which contributes to the 
exceptional almost perfect nonlinear conjecture.

\begin{corollary}
Let $m\geq 8$
   be an integer  such that $m\equiv{0}\pmod{4}$ and $m-1\in \mathcal{M}$. 
 Polynomials
 $f(x)=a_0x^m+a_1x^{m-1}+a_{2}x^{m-2}\in {\mathbb F}_{2^n}[x]$
 of degree $m$ such that $a_1\neq 0$
 are not exceptional APN.
\end{corollary}

\begin{proof}
 For $n$ sufficiently large, 
 if $f(x)=a_0x^m+a_1x^{m-1}+a_{2}x^{m-2}\in {\mathbb F}_{2^n}[x]$
is a polynomial of degree $m$ such that $a_1\neq 0$
 then Theorem \ref{theorem:main} gives that $\delta_{\mathbb{F}_{2^n}}(f)$ is maximal, 
 that is $\delta_{\mathbb{F}_{2^n}}(f)=m-2.$
 In particular
 such polynomials are not exceptional APN.
\end{proof}

\bibliographystyle{plain}
\bibliography{biblio}
\Addresses
\end{document}